\documentclass[12pt]{amsproc}
\newtheorem{theorem}{\sc Theorem}
\newtheorem{lemma}[theorem]{\sc Lemma}

\newtheorem{corollary}[theorem]{\sc Corollary}

\begin{document} 
\author{ Raimundo Bastos}
\address{ Departamento de Matem\'atica, Universidade de Bras\'ilia,
Bras\'ilia-DF, 70910-900 Brazil }
\email{bastos@mat.unb.br}
\author{ Carmine Monetta }
\address{Dipartimento di Matematica, Universit\`a di Salerno, Via Giovanni Paolo II, 132 - 84084 - Fisciano (SA), Italy}
\email{cmonetta@unisa.it}
\author{ Pavel Shumyatsky }
\address{ Departamento de Matem\'atica, Universidade de Bras\'ilia,
Bras\'ilia-DF, 70910-900 Brazil}
\email{pavel@unb.br}

\keywords{Finite groups, commutators}
\subjclass[2010]{20D30, 20D25}

\title[metanilpotency]{A criterion for metanilpotency of a finite group}

\maketitle

\begin{abstract}
We prove that the $k$th term of the lower central series of a finite group $G$ is nilpotent if and only if $|ab|=|a||b|$ for any $\gamma_k$-commutators $a,b\in G$ of coprime orders.  
\end{abstract}

\section{Introduction} 
All groups considered in this article are finite. 
The following sufficient condition for nilpotency of a group $G$ was discovered by B. Baumslag and J. Wiegold \cite{baubau}.
\medskip

{\it Let $G$ be a group in which $|ab|=|a||b|$ whenever the elements $a,b$ have coprime orders. Then $G$ is nilpotent.}
\medskip

Here the symbol $|x|$ stands for the order of the element $x$ in a group $G$. In \cite{BS} a similar sufficient condition for nilpotency of the commutator subgroup $G'$ was established. 
\medskip

{\it Let $G$ be a group in which $|ab|=|a||b|$ whenever the elements $a,b$ are commutators of coprime orders. Then $G'$ is nilpotent.}
\medskip

Of course, the conditions in both above results are also necessary for the nilpotency of $G$ and $G'$, respectively. In the present article we extend the above results as follows.

Given an integer $k\geq1$, the word $\gamma_{k}=\gamma_k(x_1,\dots,x_k)$ is defined inductively by the formulae
\[
\gamma_1=x_1,
\qquad \text{and} \qquad
\gamma_k=[\gamma_{k-1},x_k]=[x_1,\ldots,x_k]
\quad
\text{for $k\ge 2$.}
\]
The subgroup of a group $G$ generated by all values of the word $\gamma_k$ is denoted by $\gamma_k(G)$. Of course, this is the familiar $k$th term of the lower central series of $G$. If $k=2$ we have $\gamma_k(G)=G'$. In the sequel the values of the word $\gamma_k$ in $G$ will be called $\gamma_k$-commutators.

\begin{theorem}\label{main} The $k$th term of the lower central series of a group $G$ is nilpotent if and only if $|ab|=|a||b|$ for any $\gamma_k$-commutators $a,b\in G$ of coprime orders. 
\end{theorem}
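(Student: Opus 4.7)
The forward direction is immediate. If $\gamma_k(G)$ is nilpotent, it is the direct product of its Sylow subgroups, so any two of its elements of coprime orders commute and satisfy $|ab|=|a||b|$; in particular, this holds for any two $\gamma_k$-commutators of coprime orders.

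For the converse, I would begin by invoking the well-known fact that for elements $x,y$ of a finite group with $\gcd(|x|,|y|)=1$, the equality $|xy|=|x||y|$ forces $xy=yx$. The hypothesis thereby becomes: coprime-order $\gamma_k$-commutators pairwise commute. The plan is then to prove the following natural extension of both the Baumslag--Wiegold theorem and of \cite{BS}: if $N$ is a normal subgroup of a finite group $G$ generated by a $G$-conjugation-invariant set $X$ whose coprime-order elements pairwise commute, then $N$ is nilpotent. Since the identity $[x_1,\dots,x_k]^g=[x_1^g,\dots,x_k^g]$ shows that the $\gamma_k$-values form a $G$-invariant set, this assertion applied to $N=\gamma_k(G)$ would yield the theorem.

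To prove the general claim I would induct on $|G|$ and, in a minimum counterexample, introduce for each prime $p$ dividing $|N|$ the subgroup $N_p=\langle X\cap\{p\text{-elements}\}\rangle$; the coprime-commuting property forces $[N_p,N_q]=1$ for distinct primes $p,q$. The decisive step is to show that each $N_p$ is actually a $p$-group, for then $N=\prod_p N_p$ is a direct product of commuting $p$-groups and hence nilpotent. The principal obstacle is exactly this step: two elements of $X\cap N_p$ share the prime $p$ in their orders, so the hypothesis gives no information about their product, and \emph{a priori} $N_p$ could contain non-$p$-elements. Surmounting this requires genuine use of the $G$-invariance of $X$, coprime-action lemmas, and a careful analysis of chief factors of $N$; a further subtlety is to verify that the hypothesis is inherited by suitable proper quotients $G/M$, since the orders of $\gamma_k$-commutators may shrink in $G/M$ so that a coprime pair there need not lift to one in $G$.
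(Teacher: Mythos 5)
Your forward direction is fine, but the converse rests on two claims that are both false, so the approach as outlined cannot be repaired without changing it in an essential way. First, the ``well-known fact'' you invoke --- that $\gcd(|x|,|y|)=1$ and $|xy|=|x||y|$ force $xy=yx$ --- is not true: in $S_6$ take $x=(1\,2\,3)$ and $y=(1\,4)(2\,5)(3\,6)$; then $xy$ is a $6$-cycle, so $|xy|=6=|x||y|$, yet $x$ and $y$ do not commute. The correct way to extract commutativity from the hypothesis is the one used in Lemma~\ref{bbb}: apply the order condition to the pair $x$ and $[x,y]$ (not to $x$ and $y$), and observe that $x[x,y]=x^y$ has the same order as $x$, which forces $[x,y]=1$. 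Second, and more fatally, the general statement you propose to reduce to --- that a normal subgroup generated by a conjugation-invariant set whose coprime-order elements pairwise commute must be nilpotent --- is false. The Kassabov--Nikolov example \cite{kani}, quoted in the introduction of this very paper, gives for each $n\geq7$ a commutator word $w$ all of whose nontrivial values in $A_n$ have order $3$; the set of $w$-values is conjugation-invariant, its coprime-order pairs commute vacuously (in any such pair one member is trivial), and yet $w(A_n)=A_n$ is simple and nonnilpotent.

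What your abstraction discards is precisely the property of $\gamma_k$-values that the proof needs: the set $X$ of $\gamma_k$-commutators is closed under further commutation, i.e.\ $[x,g]\in X$ for every $x\in X$ and $g\in G$, since every $\gamma_{k+1}$-value is a $\gamma_k$-value. This closure is what drives Lemma~\ref{bbb} (an element of $X$ centralizes any coprime subgroup it normalizes) and Lemma~\ref{gamma} (via Frobenius' normal $p$-complement theorem, a perfect group is generated by $\gamma_k$-values of prime-power order), and these are the engine of the whole argument. Beyond that, your sketch does not engage with the nonsoluble case at all, whereas the paper must: after settling the soluble case by a Fitting-height and tower argument (Lemma~\ref{solu}), it takes a minimal counterexample, shows it is quasisimple, manufactures a $\gamma_k$-value that is an involution modulo the centre using Frobenius' and Thompson's theorems, and reaches a contradiction via the Baer--Suzuki theorem. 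Finally, the ``decisive step'' you identify (that each $N_p$ is a $p$-group) is exactly the part you leave unproved, so even granting the false reduction the argument would be incomplete.
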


Recall that a group $G$ is called metanilpotent if there is a normal nilpotent subgroup $N$ such that $G/N$ is nilpotent. The following corollary is immediate.

\begin{corollary}\label{ain} A group $G$ is metanilpotent if and only if there exists a positive integer $k$ such that $|ab|=|a||b|$ for any $\gamma_k$-commutators $a,b\in G$ of coprime orders. 
\end{corollary}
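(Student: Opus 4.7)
The ``only if'' direction is immediate: if $\gamma_k(G)$ is nilpotent, it decomposes as the direct product of its Sylow subgroups, so any two of its elements of coprime orders commute; since every $\gamma_k$-commutator lies in $\gamma_k(G)$, the equality $|ab|=|a||b|$ follows.

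For the ``if'' direction my plan is to argue by induction on $|G|$, paralleling the case $k=2$ treated in \cite{BS}. A preliminary step is to verify that the hypothesis descends to every quotient $G/N$: this point is delicate, since lifting $\gamma_k$-commutators $\bar a, \bar b \in G/N$ of coprime orders to $\gamma_k$-commutators in $G$ need not preserve coprimality of orders. I would exploit the $G$-invariance of the set of $\gamma_k$-values together with a careful choice of lifts (adjusting by elements of $N$ and passing to suitable prime-power components) in order to transfer the hypothesis to $G/N$.

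Assuming quotient inheritance, take a minimal counterexample $G$. By induction $\gamma_k(G/N) = \gamma_k(G)N/N$ is nilpotent for every nontrivial normal subgroup $N$ of $G$. The standard two-minimal-normal-subgroups argument---if $M_1 \cap M_2 = 1$ for distinct minimal normal subgroups $M_1, M_2$, then $\gamma_k(G)$ embeds in $\gamma_k(G/M_1) \times \gamma_k(G/M_2)$, a product of nilpotent groups---forces $G$ to have a unique minimal normal subgroup $M$. Moreover $M \le \gamma_k(G)$, since otherwise $\gamma_k(G) \cong \gamma_k(G)M/M$ would be nilpotent, contradicting the choice of $G$.

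It remains to rule out the two structural possibilities for $M$. If $M$ is a direct product of copies of a non-abelian simple group $S$, then $\gamma_k(S) = S$ because $S$ is perfect, and in fact every element of $S$ is a $\gamma_k$-value; one then invokes the arithmetic of element orders in $S$ (for instance $A_5$ has elements of orders $2$ and $3$ but no element of order $6$) to exhibit a pair of coprime-order $\gamma_k$-commutators inside $S$ violating the hypothesis. If $M$ is an elementary abelian $p$-group, one first shows, by computing $|cm|$ in the semidirect product of $M$ by $\langle c \rangle$, that every $\gamma_k$-commutator $c$ of $p'$-order centralizes $M$ (else $|cm| \mid |c| < |c| \cdot p$, contrary to the hypothesis); this must then be upgraded to $M \le Z(\gamma_k(G))$ using the Hall decomposition of $\gamma_k(G)$ provided by the nilpotency of $\gamma_k(G)/M$, after which $\gamma_k(G)$ is itself nilpotent, the desired contradiction. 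The main obstacles I anticipate are the quotient-inheritance step in the inductive setup, the upgrade from ``$p'$-commutators centralize $M$'' to ``$\gamma_k(G)$ centralizes $M$'' in the elementary abelian case, and a uniform treatment of non-abelian simple sections.
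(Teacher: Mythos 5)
The paper's proof of this corollary is a two-line deduction from Theorem \ref{main}: if $G$ is metanilpotent with $G/N$ nilpotent of class $c$, then $\gamma_{c+1}(G)\leq N$ is nilpotent and the order condition holds with $k=c+1$; conversely, the condition for some $k$ gives, by Theorem \ref{main}, that $\gamma_k(G)$ is nilpotent, and since $G/\gamma_k(G)$ is always nilpotent, $G$ is metanilpotent. Your proposal never invokes Theorem \ref{main} and instead sketches a from-scratch proof of its hard direction; it also omits the (easy but necessary) observation that metanilpotency supplies a $k$ with $\gamma_k(G)$ nilpotent, which is the actual starting point of your ``only if'' argument.

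The from-scratch sketch has gaps that are not merely technical. First, quotient inheritance of the hypothesis cannot be obtained by ``a careful choice of lifts'': a $\gamma_k$-value of $G/N$ lifts to a $\gamma_k$-value of $G$, but the $\pi$-part of that lift need not be a $\gamma_k$-value, and no adjustment by elements of $N$ is known to repair this; the paper's proof of Theorem \ref{main} is structured precisely to avoid ever passing the hypothesis to a quotient --- it extracts the quotient-friendly consequence Lemma \ref{bbb} and then works with towers and Fitting height in the soluble case, and with the soluble radical, Frobenius, Thompson and Baer--Suzuki in general. Second, ``every element of $S$ is a $\gamma_k$-value because $S$ is perfect'' is a non sequitur: perfectness gives $\gamma_k(S)=S$ as a subgroup, not surjectivity of the word map; the true statement needs CFSG-level input, and even granting it you would still need, uniformly over all nonabelian simple groups, a pair of coprime-order $\gamma_k$-values whose product violates the hypothesis --- exactly the part of the paper's argument that consumes Frobenius, Thompson and Baer--Suzuki. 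Third, in the elementary abelian case the upgrade from ``$p'$-order $\gamma_k$-values centralize $M$'' to ``$M\leq Z(\gamma_k(G))$'' fails as stated, because a Hall $p'$-subgroup of $\gamma_k(G)$ need not be generated by $\gamma_k$-values of $p'$-order; the paper needs Lemma \ref{gamma} (valid only for perfect groups) to circumvent this. The efficient and intended route is simply to quote Theorem \ref{main}.
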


We suspect that a similar criterion of nilpotency of the $k$th term of the derived series of $G$ can be established. On the other hand, Kassabov and Nikolov showed in \cite{kani} that for any $n\geq7$ the alternating group $A_n$ admits a commutator word all of whose nontrivial values have order 3. Thus, the verbal subgroup $w(G)$ need not be nilpotent even if all $w$-values have order dividing 3.

\section{Proofs}
As usual, if $\pi$ is a set of primes, we denote by $\pi'$ the set of all primes that do not belong to $\pi$. For a group $G$ we denote by $\pi(G)$ the set of primes dividing the order of $G$. The maximal normal $\pi$-subgroup of $G$ is denoted by $O_{\pi}(G)$. The Fitting subgroup of $G$ is denoted by $F(G)$. The Fitting height of $G$ is denoted by $h(G)$. Throughout the article we use without special references the well-known properties of coprime actions:  if $\alpha$ is an automorphism of a finite group $G$ of coprime order, $(|\alpha|,|G|)=1$, then $C_{G/N}(\alpha)=C_G(\alpha)N/N$ for any $\alpha$-invariant normal subgroup $N$, the equality $[G,\alpha]=[[G,\alpha],\alpha]$ holds, and if $G$ is in addition abelian, then $G=[G,\alpha]\times C_G(\alpha)$. Here $[G,\alpha]$ is the subgroup of $G$ generated by the elements of the form $g^{-1}g^\alpha$, where $g\in G$.

For elements $x,y$ of a group $G$ write $[x,{}_0y]=x$ and $[x,{}_{i+1}y]=[[x,{}_{i}y],y]$ for $i\geq0$. An element $y\in G$ is called Engel if for any $x\in G$ there is a positive integer $n=n(x)$ such that $[x,{}_{n}y]=1$.

The following lemma is well-known. 
\begin{lemma} \label{meta} Let $p$ be a prime and $G$ a metanilpotent group.  Suppose that $x$ is a $p$-element in $G$ such that $[O_{p'}(F(G)),x]=1$. Then $x\in F(G)$. 
\end{lemma}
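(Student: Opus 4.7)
The plan is to show that the normal closure $N := \langle x^G\rangle$ of $x$ in $G$ is contained in $F(G)$; since $x \in N$, this will give $x\in F(G)$. Write $Q := O_{p'}(F(G))$ for brevity.

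First I would establish two properties of $N$. Since $Q$ is normal in $G$, conjugation preserves the centralization hypothesis: $[x^g,Q] = [x^g,Q^g] = [x,Q]^g = 1$ for every $g\in G$, so $[N,Q]=1$. On the other hand, $xF(G)$ is a $p$-element of the nilpotent quotient $G/F(G)$, and in a nilpotent group the normal closure of a $p$-element is contained in the unique Sylow $p$-subgroup; hence $NF(G)/F(G)$ is a $p$-group.

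To combine these, use the direct decomposition $F(G)=O_p(G)\times Q$ of the nilpotent Fitting subgroup into its $p$- and $p'$-parts, which yields $N\cap F(G)=(N\cap O_p(G))\times(N\cap Q)$. Since $|N/(N\cap F(G))|$ is a $p$-power and $|N\cap O_p(G)|$ is a $p$-power, the index $|N/(N\cap Q)|$ is also a $p$-power. Now $N\cap Q$ is a $p'$-subgroup of $N$ that is central in $N$ (by $[N,Q]=1$) with $p$-group quotient, so Schur--Zassenhaus gives a direct decomposition $N = K\times (N\cap Q)$ with $K$ a $p$-group. But $K$ is precisely the set of $p$-elements of $N$, hence characteristic in $N$; since $N\triangleleft G$ we conclude $K\triangleleft G$, so $K\leq O_p(G)$. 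Consequently $N = K\cdot(N\cap Q)\leq O_p(G)\cdot Q = F(G)$, as required.

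The argument presents no serious obstacle, but the conceptual point worth highlighting is that $\langle x\rangle$ itself need not be normal in $G$, so one cannot hope to control it directly; passing to the normal closure $\langle x^G\rangle$ is exactly what lets the centralization hypothesis on $Q$ and the nilpotency of $G/F(G)$ play off each other symmetrically and force membership in $F(G)$.
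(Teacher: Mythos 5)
Your proof is correct, but it takes a genuinely different route from the paper's. The paper reduces the lemma to Baer's theorem that the Engel elements of a finite group lie in the Fitting subgroup (\cite[12.3.7]{Rob}): writing $F=F(G)=P\times O_{p'}(F)$ and using nilpotency of class $n$ of $G/F$, it observes that $[G,{}_{n}x]\leq F$ and hence, by the centralizing hypothesis, $[G,{}_{n+1}x]\leq P$, so that $\langle[G,{}_{n+1}x],x\rangle$ is a $p$-group and $x$ is Engel. You instead avoid the Engel machinery entirely and argue structurally with the normal closure $N=\langle x^G\rangle$: you show $[N,Q]=1$ and that $N$ is a $p$-group modulo $F(G)$, then use the primary decomposition $F(G)=O_p(G)\times Q$ and Schur--Zassenhaus to split $N=K\times(N\cap Q)$ with $K$ the characteristic subgroup of $p$-elements, whence $K\leq O_p(G)$ and $N\leq F(G)$. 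Each step of yours checks out (in particular $N\cap F(G)=(N\cap O_p(G))\times(N\cap Q)$ is legitimate because $F(G)$ is nilpotent with these as its $p$- and $p'$-parts, and $N\cap Q$ is central in $N$ so the Schur--Zassenhaus complement is automatically normal). The trade-off: the paper's argument is shorter but leans on a nontrivial black box about Engel elements, while yours is longer but essentially self-contained, using only Sylow theory, Schur--Zassenhaus, and elementary properties of the Fitting subgroup. One small remark: since $N\cap Q$ is central in $N$ and has $p$-power index, you could replace Schur--Zassenhaus by simply taking a Sylow $p$-subgroup of $N$, which already covers the quotient and is normal by the same centrality computation.
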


\begin{proof} Since all Engel elements of a finite group lie in the Fitting subgroup \cite[12.3.7]{Rob}, it is sufficient to show that $x$ is an Engel element. Let $F=F(G)$ and $P$ be the Sylow $p$-subgroup of $F$. We have $F= P\times O_{p'}(F)$. By hypothesis, $G/F$ is nilpotent of class $n$ for some positive integer $n$. We deduce that $[G,{}_{n}x]\leq F$ and so $[G,_{n+1}x] \leq P$. Therefore $\langle[G,_{n+1}x],x\rangle$ is contained in a Sylow $p$-subgroup of $G$. Hence, $x$ is an Engel element in $G$ and so the lemma follows. 
\end{proof}

\begin{lemma}\label{gamma} Let $k$ be a positive integer and $G$ a group such that $G=G'$. Let $q\in\pi(G)$. Then $G$ is generated by $\gamma_k$-commutators of $p$-power order for primes $p\neq q$.  
\end{lemma}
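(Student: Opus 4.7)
My plan is to let $R$ denote the subgroup of $G$ generated by all $\gamma_k$-commutators of prime-power order $p^a$ with $p\neq q$. Since the identity $[x_1,\ldots,x_k]^g=[x_1^g,\ldots,x_k^g]$ shows that conjugation preserves both the $\gamma_k$-commutator condition and the element order, the generating set of $R$ is $G$-invariant, and hence $R$ is normal in $G$. The lemma is the assertion $R=G$, which I aim to prove in two stages.

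First, I would reduce to a statement about Sylow subgroups of $G$. From $G=G'$ an easy induction gives $\gamma_k(G)=G$ for every $k\geq 1$, so $G$ is perfect. Let $K$ be the subgroup of $G$ generated by all its $q'$-elements; $K$ is normal in $G$, the quotient $G/K$ is a $q$-group, and as a quotient of a perfect group it is itself perfect. A finite perfect $q$-group is trivial, hence $K=G$. Consequently every element of $G$ is a product of $q'$-elements, and every $q'$-element is a product of commuting prime-power elements whose primes differ from $q$; such prime-power elements lie in Sylow $p$-subgroups of $G$ for various $p\neq q$. It therefore suffices to show that every Sylow $p$-subgroup $P$ of $G$ with $p\neq q$ is contained in $R$.

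For the second stage I would invoke the following generation principle, well known in the study of outer commutator words: in any finite group $H$, for every $k\geq 1$ and every prime $p$, each Sylow $p$-subgroup of $\gamma_k(H)$ is generated by $\gamma_k$-commutators of $p$-power order. Applied to $H=G$ with $\gamma_k(G)=G$, every Sylow $p$-subgroup $P$ of $G$ is generated by $\gamma_k$-commutators of $p$-power order; when $p\neq q$ these generators are precisely among the defining generators of $R$, so $P\leq R$. Combining this across all primes $p\neq q$ with the reduction above yields $G\leq R$, whence $R=G$. The hardest part is justifying the generation principle for $k\geq 3$: the case $k=2$ follows immediately from Higman's classical focal subgroup theorem, while for $k\geq 3$ one needs an inductive extension that rewrites each focal generator of $P\cap G$ by expanding $x\in\gamma_{k-1}(G)=G$ as a product of $\gamma_{k-1}$-commutators, distributing with the outer bracket to express it as a product of $\gamma_k$-commutators, and then using the coprime-action machinery summarised at the start of the section to arrange that the resulting $p$-power pieces lie inside $P$.
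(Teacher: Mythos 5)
Your global structure agrees with the paper's: you form the normal subgroup $R$ generated by the relevant $\gamma_k$-commutators, reduce the lemma to showing that every Sylow $p$-subgroup with $p\neq q$ lies in $R$, and finish by observing that $G/R$ is a perfect $q$-group, hence trivial. That reduction is correct and matches the paper's final step. The problem is that the entire content of the lemma is then concentrated in your ``generation principle'' for $k\geq 3$, and this is neither something you can simply cite at the level of generality you state it (for $k\geq 3$ it is essentially equivalent to the statement being proved) nor actually established by your sketch. The focal subgroup theorem does give the case $k=2$, since the focal generators $[x,g]$ of $P\cap G'$ lie in $P$ and hence have $p$-power order. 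But your proposed inductive extension breaks down at the decisive point: if you expand a focal generator as $[c_1\cdots c_m,g]$ with each $c_i$ a $\gamma_{k-1}$-value and distribute, you obtain a product of conjugates of the $\gamma_k$-values $[c_i,g]$; the product has $p$-power order because it lies in $P$, but the individual factors need not be $p$-elements, need not lie in $P$, and nothing controls their orders. Appealing to ``coprime-action machinery'' to ``arrange that the resulting $p$-power pieces lie inside $P$'' names no mechanism that could do this: $g$ is an arbitrary element of $G$, so there is no coprime action in sight, and the $c_i$ are not at your disposal to modify. This is exactly the difficulty the lemma must overcome, so the proof is incomplete.

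The paper closes this gap by a different device, which is worth internalizing. For each $p\neq q$ it considers $N_p$, the subgroup generated by the $\gamma_k$-commutators of $p$-power order, and argues in $G/N_p$: since $G$ is perfect, a nontrivial Sylow $p$-subgroup forces, via Frobenius's normal $p$-complement theorem, the existence of a $p$-subgroup $H$ and a $p'$-element $a\in N_G(H)$ with $[H,a]\neq1$. The coprime-action identity $[H,a]=[[H,a],a]$ then gives $[H,a]=[H,{}_{k-1}a]$, which is generated by $\gamma_k$-values of the form $[h,{}_{k-1}a]$ that automatically lie in the $p$-group $H$ and therefore have $p$-power order, yielding the contradiction. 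In other words, the coprime action is manufactured by Frobenius's theorem rather than extracted from an arbitrary focal generator, and that is what guarantees the generating $\gamma_k$-values land inside a $p$-subgroup. To repair your argument you would need either to prove your generation principle for $k\geq 3$ along these lines or to replace your second stage with this Frobenius argument directly.
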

\begin{proof} For each prime $p\in\pi(G)\setminus\{q\}$ let $N_p$ denote the subgroup generated by all $\gamma_k$-commutators of $p$-power order. Let us show first that for each $p$ the Sylow $p$-subgroups of $G$ are contained in $N_p$. Suppose that this is false and choose $p$ such that a Sylow $p$-subgroup of $G$ is not contained in $N_p$. We can pass to the quotient $G/N_p$ and assume that $N_p=1$. Since $G=G'$, it is clear that $G$ does not possess a normal $p$-complement. Therefore the Frobenius Theorem \cite[Theorem 7.4.5]{go} shows that $G$ has a $p$-subgroup $H$ and a $p'$-element $a\in N_G(H)$ such that $[H,a]\neq1$. We have $$1\neq[H,a]=[H,\underbrace{a,\ldots,a}_{(k-1)\ times}]\leq N_p,$$ a contradiction. Therefore indeed $N_p$ contains the Sylow $p$-subgroups of $G$. Let $T$ be the product of all $N_p$ for $p\neq q$. We see that $G/T$ is a $q$-group. Since $G=G'$, we conclude that $G=T$. The proof is complete.
\end{proof}

Let us call a subgroup $H$ of $G$ a tower of height $h$ if $H$ can be written as a product $H=P_1\cdots P_h$, where

(1) $P_i$ is a $p_i$-group ($p_i$ a prime) for $i=1,\dots,h$.

(2) $P_i$ normalizes $P_j$ for $i<j$.

(3) $[P_i,P_{i-1}]=P_i$ for $i=2,\dots,h$. \\ 

It follows from (3) that $p_i\neq p_{i+1}$ for $i=1,\dots,h-1$. A finite soluble group $G$ has Fitting height at least $h$ if and only if $G$ possesses a tower of height $h$ (see for example \cite{turull}). \\

Throughout the remaining part of the article $G$ denotes a finite group for which there exists $k\geq1$ such that $|ab|=|a||b|$ for any $\gamma_k$-commutators $a,b\in G$ of coprime orders. We denote by $X$ the set of all $\gamma_k$-commutators in $G$.

\begin{lemma}\label{bbb} Let $x\in X$ and $N$ be a subgroup normalized by $x$. If $(|N|,|x|)=1$, then $[N,x]=1$.
\end{lemma}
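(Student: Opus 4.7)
The plan is to fix an arbitrary $n\in N$ and show that $[x,n]=1$; since $n$ is arbitrary, this immediately yields $[N,x]=1$. The strategy is to produce a second $\gamma_k$-commutator whose order is coprime to $|x|$, and then exploit the multiplicativity hypothesis on products.

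First I would observe that, since $N$ is $x$-invariant, $[x,n]$ lies in $N$, so $|[x,n]|$ divides $|N|$ and is in particular coprime to $|x|$. This gives one of the two ingredients needed to apply the hypothesis; the other is that $[x,n]$ is itself a $\gamma_k$-value.

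The main step, and the one requiring some care, is precisely this re-bracketing. Writing $x=[x_1,\ldots,x_k]$, one has $[x,n]=[x_1,\ldots,x_k,n]$, which a priori is only a $\gamma_{k+1}$-commutator. For $k\geq 2$, however, one can put $y_1=[x_1,x_2]$, $y_j=x_{j+1}$ for $2\leq j\leq k-1$, and $y_k=n$, so that
\[
[y_1,\ldots,y_k]=[x,n],
\]
exhibiting $[x,n]$ as a $\gamma_k$-commutator. The case $k=1$ is trivial, since every element of $G$ is a $\gamma_1$-commutator.

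Once $[x,n]$ is identified as a $\gamma_k$-commutator of order coprime to $|x|$, the standing hypothesis gives $|x\cdot[x,n]|=|x|\cdot|[x,n]|$. But the identity $x\cdot[x,n]=x\cdot x^{-1}x^n=x^n$ forces the left-hand side to equal $|x|$, so $|[x,n]|=1$, as desired. The only genuinely non-routine step is the re-bracketing used to move from a $\gamma_{k+1}$-value to a $\gamma_k$-value; after that the argument is one line.
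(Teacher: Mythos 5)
Your proposal is correct and follows essentially the same route as the paper: both observe that $[x,n]\in N$ has order coprime to $|x|$, that $[x,n]$ is itself a $\gamma_k$-value, and that $x[x,n]=x^n$ has order $|x|$, forcing $[x,n]=1$ by the multiplicativity hypothesis. The only difference is that you spell out the re-bracketing showing a $\gamma_{k+1}$-value is a $\gamma_k$-value, which the paper leaves implicit.
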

\begin{proof} Choose $y\in N$. The order of the $\gamma_k$-commutator $[x,y]$ is prime to that of $x$. Therefore we must have $|x[x,y]|=|x||[x,y]|$. However $x[x,y]=y^{-1}xy$. This is a conjugate of $x$ and so $|x[x,y]|=|x|$. Therefore $[x,y]=1$.
\end{proof}

\begin{lemma}\label{solu} If $G$ is soluble, then the subgroup $\gamma_k(G)$ is nilpotent.
\end{lemma}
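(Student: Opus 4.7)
The plan is to induct on $|G|$ and show that $X\subseteq F(\gamma_k(G))$, which forces $\gamma_k(G)$ to lie in its own Fitting subgroup and hence be nilpotent. For $k=1$ the hypothesis coincides with that of Baumslag--Wiegold quoted in the introduction and yields $G=\gamma_1(G)$ nilpotent, so I may assume $k\geq 2$. Set $H=\gamma_k(G)$. Because $G$ is soluble and $k\geq 2$, either $G=1$ (trivial) or $H\leq G'<G$; in the nontrivial case $|H|<|G|$, and $H$ inherits the hypothesis from $G$ since every $\gamma_k$-commutator of $H$ is a $\gamma_k$-commutator of $G$. Induction therefore gives that $\gamma_k(H)$ is nilpotent. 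Since $H/\gamma_k(H)$ is nilpotent of class less than $k$, the group $H$ is metanilpotent, and Lemma \ref{meta} becomes available inside $H$.

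Fix $x\in X$. If $|x|=p^{a}$ is a prime power, then $N:=O_{p'}(F(H))$ is a normal subgroup of $H$ normalized by $x$ with $(|N|,|x|)=1$; Lemma \ref{bbb} gives $[N,x]=1$, and Lemma \ref{meta} then places $x$ in $F(H)$. For $x$ of composite order $|x|=p_1^{a_1}\cdots p_r^{a_r}$ with $r\geq 2$, decompose $x=x_1\cdots x_r$ into its commuting prime-power components. By Lemma \ref{meta} it suffices to show $[O_{p_i'}(F(H)),x_i]=1$ for every $i$. Lemma \ref{bbb} applied to $x$ itself already yields $[O_q(F(H)),x]=1$ for every prime $q\notin\{p_1,\dots,p_r\}$, and this centralization is inherited by each $x_i$. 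The remaining point to verify is $[O_{p_j}(F(H)),x_i]=1$ for $j\neq i$, where $x_i$ is a $p_i$-element acting coprimely on the $p_j$-group $O_{p_j}(F(H))$. I would handle this by a coprime-action analysis that exploits the explicit expression $x=\gamma_k(a_1,\dots,a_k)$: a hypothetical nontrivial action of $x_i$ on $O_{p_j}(F(H))$ should be converted, via a commutator identity of the type $\gamma_k(a_1,\dots,a_{k-1},a_k y)=[b,y]\cdot x^{y}$ with $b=\gamma_{k-1}(a_1,\dots,a_{k-1})$, into a second $\gamma_k$-commutator whose order is coprime to $|x|$ but which does not commute with $x$, contradicting the hypothesis.

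The principal obstacle is exactly this composite-order step: the prime-power components $x_i$ are not themselves $\gamma_k$-commutators in general, so Lemmas \ref{bbb} and \ref{meta} do not apply to them directly, and one must manufacture the missing coprime-order $\gamma_k$-commutators out of $x$ by varying the defining tuple $(a_1,\dots,a_k)$. Everything else---the induction setup, the reduction to a metanilpotent $H$, the prime-power-order case, and the final passage from $X\subseteq F(H)$ to the nilpotency of $H=\gamma_k(G)$---follows cleanly from the machinery already developed.
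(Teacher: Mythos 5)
Your proof is incomplete, and the gap is exactly the one you flag yourself: the case of a $\gamma_k$-commutator $x$ of composite order. Your strategy is to show $X\subseteq F(H)$ with $H=\gamma_k(G)$, and for that you must verify the hypothesis of Lemma \ref{meta} for each prime component $x_i$ of $x$. But Lemma \ref{bbb} applies only to elements of $X$, and the components $x_i$ (being powers of $x$, not $\gamma_k$-commutators) are not in $X$ in general; so you cannot conclude $[O_{p_j}(F(H)),x_i]=1$ for $p_j\in\pi(|x|)$, $j\neq i$. The sketched remedy --- using the identity $\gamma_k(a_1,\dots,a_{k-1},a_ky)=[b,y]\,x^y$ to ``manufacture'' a coprime-order $\gamma_k$-commutator not commuting with $x$ --- is not an argument: you give no recipe for choosing $y$, no reason the resulting value has order coprime to $|x|$, and no reason it fails to commute with $x$. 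As it stands, the composite-order step is an open problem inside your proof, not a step of it. (The framework around it is sound: the induction on $|G|$, the reduction to a metanilpotent $H$, the prime-power case via Lemmas \ref{bbb} and \ref{meta}, and the passage from $X\subseteq F(H)$ to $H=F(H)$ are all correct.)

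It is worth seeing how the paper sidesteps this difficulty entirely: it never needs to place an arbitrary element of $X$ into a Fitting subgroup. Instead it argues by Fitting height. If $h(G)=2$ and $\gamma_k(G)\not\leq F(G)$, then some Sylow $p$-subgroup $P$ has image of class at least $k$ in $G/F(G)$, which produces a $\gamma_k$-commutator in elements of $P$ --- hence automatically a $p$-element, of prime-power order --- lying outside $F(G)$; Lemmas \ref{bbb} and \ref{meta} then give an immediate contradiction. If $h(G)\geq 3$, a tower $P_1P_2\cdots P_h$ gives $P_2=[P_2,{}_{k-1}P_1]$, so $P_2$ is generated by $\gamma_k$-commutators of $p_2$-power order, and Lemma \ref{bbb} forces $[P_3,P_2]=1$, contradicting $[P_3,P_2]=P_3$. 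In both cases the witnesses are manufactured to have prime-power order from the outset, which is precisely the feature your approach lacks. If you want to salvage your induction, you should replace the claim ``$X\subseteq F(H)$'' by an argument that produces prime-power-order elements of $X$ witnessing any failure of nilpotency, in the spirit of the Sylow/tower constructions above.
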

\begin{proof} Set $h=h(G)$ and $F = F(G)$. If $G$ is nilpotent, the result is immediate so we assume that $h\geq 2$.

We first examine the case $h=2$. If $G/F$ has nilpotency class at most $k-1$, then $\gamma_k(G) \leq F$ is nilpotent. So we will assume that $G/F$ is nilpotent of class at least $k$. Hence, the image of some Sylow $p$-subgroup $P$ of $G$ in $G/F$ has nilpotency class at least $k$. Therefore, there exists a $\gamma_k$-commutator $x$ in elements of $P$ which does not belong to $F$. By Lemma \ref{bbb} $[O_{p'}(F),x]=1$, whence by Lemma \ref{meta}, $x\in F$. This is a contradiction.

Now assume that $h\geq 3$. By \cite[Lemma 1.9]{turull}, there exists a tower $P_1P_2P_3\ldots P_h$ of height $h$ in $G$. Since $P_2=[P_2,P_1]$, it follows that $$P_2=[P_2,\underbrace{P_1, \ldots,P_1}_{(k-1)\ times}].$$ 
Combining Lemma \ref{bbb} with the fact that $P_2$ is generated by $\gamma_k$-commutators of $G$ of $p_2$-orders, we deduce that $P_3$ commutes with $P_2$. On the other hand, $[P_3,P_2]=P_3$, because $P_1P_2P_3\ldots P_h$ is a tower. This is a contradiction.  The proof is complete. 
\end{proof}

We are now in a position to complete the proof of Theorem \ref{main}.

\begin{proof}[Proof of Theorem \ref{main}] It is clear that if $\gamma_k(G)$ is nilpotent, then $|ab|=|a||b|$ for any $\gamma_k$-commutators $a,b\in G$ of coprime orders. So we only need to prove the converse. Since the case where $k\leq2$ was considered in \cite{BS} and \cite{baubau}, we will assume that $k\geq3$.

Suppose that the theorem is false and let $G$ be a counterexample of minimal order. In view of Lemma \ref{solu} $G$ is not soluble while all proper subgroups of $G$ are. Therefore $G=G'$. Let $R$ be the soluble radical of $G$. It follows that $G/R$ is nonabelian simple. By Lemma \ref{solu} $\gamma_k(R)$ is nilpotent. Suppose that $R\neq1$. 

Choose $q\in\pi(F(G))$. According to Lemma \ref{gamma} $G$ is generated by the $\gamma_k$-commutators of $p$-power order for primes $p\neq q$. Let $Q$ be the Sylow $q$-subgroup of $F(G)$. By Lemma \ref{bbb}, $[Q,x]=1$, for every $\gamma_k$-commutator $x$ of $q'$-order. Therefore $Q\leq Z(G)$. This happens for each choice of $q\in\pi(F(G))$ so we conclude that $F(G)\leq Z(G)$.  

For each $x\in R$ and $y\in G$ we have $$[y,\underbrace{x,\ldots,x}_{k\ times}]\in\gamma_k(R)\leq F(G)=Z(G).$$ Consequently, every element $x\in R$ is Engel. Therefore $R\leq F(G)$ (\cite[12.7.4]{Rob}). Hence, $R=Z(G)$ and $G$ is quasisimple.   

Since $G$ does not possess a normal 2-complement, it follows from the Frobenius Theorem \cite[Theorem 7.4.5]{go} that $G$ contains a 2-subgroup $H$ and an element of odd order $b\in N_G(H)$ such that $[H,b]\neq1$. In view of Thompson's Theorem \cite[Theorem 5.3.11]{go} we can assume that $H$ is of nilpotency class at most two and $H/Z(H)$ is elementary abelian. We claim that $G$ contains an element $a\in X$ such that $a$ is a 2-element and $a$ has order 2 modulo $Z(G)$. 

Recall that $k\geq3$. Let $Y$ be the set of all elements $[h,{}_{(k-2)}b]$, where $h\in H$. Since $H$ is nilpotent of class at most $2$, it follows that for every $y\in Y$ all elements in $[H,y]$ are $\gamma_k$-commutators. If for some $y\in Y$ the subgroup $[H,y]$ does not lie in $Z(G)$, every element of $[H,y]$ having order $2$ modulo $Z(G)$ enjoys the required properties. Therefore we assume that $[H,y]\leq Z(G)$ for every $y\in Y$. Let $K$ be the subgroup generated by $Y$. Obviously, $K=[K,b]$ and $K'\leq Z(G)$. It follows that $C_K(b)\leq Z(G)$ and all elements in $K$ are $\gamma_k$-commutators modulo $Z(G)$. If $d\in K$ such that $d\not\in Z(G)$ and $d^2\in Z(G)$, then the commutator $[d,b]$ is as required.

Now we fix an element $a$ with the above properties. Since $G/Z(G)$ is nonabelian simple, it follows from the Baer-Suzuki Theorem \cite[Theorem 3.8.2]{go} that there exists an element $t\in G$ such that the order of $[a,t]$ is odd. On the one hand, it is clear that $a$ inverts $[a,t]$. On the other hand, by Lemma \ref{bbb}, $a$ centralizes $[a,t]$. This is a contradiction.
\end{proof}
\section{Acknowledgment}
The work of the first and the third authors was supported by CNPq-Brazil and FAPDF. The second author was partially supported by the National Group for Algebraic and Geometric Structures, and their Applications  (GNSAGA -- INdAM). This article was written during the second author's visit to the University of Brasilia. He wishes to thank the Department of Mathematics for excellent hospitality.

\end{document}